\numberwithin{equation}{section}
\newtheorem{theorem}{Theorem}[section]
\newtheorem{lemma}[theorem]{Lemma}
\newtheorem{conjecture}[theorem]{Conjecture}
\newtheorem{corollary}[theorem]{Corollary}
\newtheorem{proposition}[theorem]{Proposition}
\theoremstyle{definition}
\newtheorem{definition}[theorem]{Definition}
\theoremstyle{remark}
\newtheorem{remark}[theorem]{Remark}
\newtheorem{example}[theorem]{Example}
\newtheorem{setting}[theorem]{Main Setting}
\setlist[enumerate,1]{label={\rm(\arabic*)}, ref={\rm\arabic*}}
\newcommand{\cI}{\mathcal{I}}
\newcommand{\cO}{\mathcal{O}}
\renewcommand{\P}{\mathbb{P}}
\DeclareMathOperator{\Sym}{Sym}
\DeclareMathOperator{\rk}{rk}
\DeclareMathOperator{\OG}{OG}
\DeclareMathOperator{\SG}{SG}
\newcommand{\longhookrightarrow}{\lhook\joinrel\longrightarrow}
\newcommand{\lra}{\longrightarrow}
\title{Algebraic hyperbolicity of very general hypersurfaces in homogeneous varieties}
\author{Lucas Mioranci}
\address{Instituto de Matem\'atica Pura e Aplicada, Estrada Dona Castorina 110, Jardim Bot\^anico, 22460-320 Rio de Janeiro, RJ, Brazil}
\email{lucas.mioranci@impa.br}
\begin{document}



\maketitle

\begin{prelims}

\DisplayAbstractInEnglish

\bigskip

\DisplayKeyWords

\medskip

\DisplayMSCclass

\end{prelims}


\newpage

\setcounter{tocdepth}{1}

\tableofcontents


\section{Introduction}

A complex projective variety $X$ is \textit{algebraically hyperbolic} if, for some ample divisor $H$, there exists a real number $\epsilon > 0$ such that, for any integral curve $C\subset X$, the inequality
\[ 2g(C)-2\ge \epsilon \deg_H(C) \]
is satisfied, where $g(C)$ is the geometric genus of $C$. The goal of this paper is to prove the algebraic hyperbolicity of very general hypersurfaces in homogeneous varieties by generalizing techniques developed by Clemens \cite{Cl86, Cl03}, Ein \cite{Ei88, Ei91}, Voisin \cite{Vo96,Vo98}, Pacienza \cite{Pa03,Pa04}, Clemens and Ran \cite{ClR04}, Coskun and Riedl \cite{CR19a,CR19}, and Yeong \cite{Y22}.

A complex manifold $X$ is called (\textit{Brody}) \textit{hyperbolic} if every entire map $f\colon \mathbb{C}\to X$ is constant, and it is \textit{Kobayashi hyperbolic} if the Kobayashi pseudometric is non-degenerate. In general, Kobayashi hyperbolicity implies Brody hyperbolicity, and when $X$ is a compact complex manifold, Brody proved that the converse holds; see \cite{Br78}. Hyperbolicity is conjectured to have deep connections to the geometry and arithmetic of varieties. 

\begin{conjecture}[Lang's conjectures \cite{L86}]
  \leavevmode
    \begin{itemize} 
    \item A projective algebraic variety $X$ is hyperbolic if and only if every subvariety of $X$ is of general type.
    \item A projective variety is hyperbolic if and only if it contains only finitely many rational points over any finite field extension.
\end{itemize}
\end{conjecture}

\begin{conjecture}[Green--Griffiths--Lang conjecture \cite{GG80}]
    A smooth projective variety $X$ of general type contains a proper subvariety $S\subsetneq X$ containing all entire curves of $X$.
\end{conjecture}

See Demailly's survey paper \cite{De20} for an exposition on hyperbolicity.

It can, however, be challenging to prove the hyperbolicity of a given variety. Demailly introduced algebraic hyperbolicity as an algebraic analogue for hyperbolicity, proved that, for smooth projective varieties, hyperbolicity implies algebraic hyperbolicity, and conjectured the converse; see \cite{De95}.

A lot of progress has been made in the study of the algebraic hyperbolicity of very general hypersurfaces in $\P^n$. For $n=3$, Xu \cite{Xu94} proved that very general hypersurfaces $X\subset \P^3$ of degree $d\ge 6$ are algebraically hyperbolic. This was improved by Coskun and Riedl \cite{CR19a}, who showed that a very general quintic surface is also algebraically hyperbolic. Since surfaces of degree at most~$4$ contain rational curves and cannot be algebraically hyperbolic, the classification for very general hypersurfaces in $\P^3$ is complete. For $n\ge 4$, Clemens and Ein \cite{Cl86, Ei88} proved that $X$ is algebraically hyperbolic when $d\ge 2n$. This was improved to $d\ge 2n-1$ by Voisin \cite{Vo96, Vo98}. Pacienza \cite{Pa04} and Clemens and Ran \cite{ClR04} proved it for $d\ge 2n-2$ and $n\ge 6$. Yeong \cite{Y22} did the octic 4-fold case and improved the result to $d\ge 2n-2$ and $n\ge 5$. If $d\le 2n-3$, then $X$ contains lines, so it is not algebraically hyperbolic. Hence, the only remaining case in $\P^n$ is for sextic threefolds.

Haase and Ilten \cite{HI21} started to classify algebraically hyperbolic surfaces in toric threefolds, and gave almost sharp bounds for the cases $\P^2\times \P^1$, $\P^1\times \P^1\times \P^1$, the blowup of $\P^3$ at a point, and the weighted projective space $\P(1,1,1,n)$. Coskun and Riedl \cite{CR19} streamlined their scroll method and used a careful analysis of where various line bundles fail to be section dominating to obtain sharp bounds in the cases $\P^2\times \P^1$, $\P^1\times \P^1\times \P^1$, the blowup of $\P^3$ at a point, and $\mathbb{F}_e\times \P^1$, and partial bounds for very general surfaces in other threefolds. Robins \cite{Rob23} used Haase and Ilten's method to determine many algebraically hyperbolic surfaces in toric varieties with Picard rank 2 and 3. Yeong \cite{Y22} applied Coskun and Riedl's techniques to almost completely classify algebraically hyperbolic hypersurfaces in $\P^m\times \P^n$, except for some degrees in $\P^3\times \P^1$.

In this paper, we generalize the techniques of Coskun, Riedl, and Yeong to the context of more general homogeneous varieties. As an application, we obtain an almost sharp bound for the degree for which very general hypersurfaces of (products of) Grassmannians, orthogonal Grassmannians, symplectic Grassmannians, and flag varieties are algebraically hyperbolic.

Let us start by defining the general setting we will refer to throughout the paper.

\begin{setting}\label{main_setting}
    Let $A$ be a smooth complex projective variety such that
    \begin{itemize}
        \item $A$ is a rational homogeneous variety with a transitive action by an algebraic group $G$;
        \item $A$ has an embedding into a product of projective spaces
        \[ i\colon A\longhookrightarrow \P^{N_1}\times \cdots \times \P^{N_m} \]
        such that the divisors $H_1, \hdots , H_m$ of $A$ corresponding to the pullbacks of the hyperplane classes $\cO_{\P^{N_i}}(1)$ generate the Picard group of $A$;
        \item the embedding $i$ is projectively normal; that is, the restriction
        \[ H^0\left (\P^{N_1}\times \cdots \times \P^{N_m}, \cO_{\P^{N_1}\times \cdots \times \P^{N_m}}(d_1,\ldots ,d_m)\right )\lra H^0(A,d_1H_1+\cdots +d_mH_m) \]
        is surjective for all $d_1,\ldots,d_m > 0$.
    \end{itemize}

    Write the class of the canonical divisor of $A$ as $K_A = a_1H_1 + \cdots + a_mH_m$. Let $\mathcal{E}$ be the line bundle $d_1H_1 + \cdots + d_mH_m$ for $d_1,\hdots ,d_m >0$, equivariant under $G$, and assume that $H_1, \hdots , H_m$ is a collection of section-dominating line bundles for $\mathcal{E}$ (see Definition~\ref{definition_section_dominating}). A degree $(d_1, \hdots , d_m)$ hypersurface $X$ of $A$ is a section of $\mathcal{E}$.
\end{setting}

\begin{theorem}\label{main_theorem} Let $A$ be a rational homogeneous variety as in Main Setting~\ref{main_setting}.
    \begin{itemize}
        \item If $\dim A\ge 4$ and $d_i\ge \dim A-a_i-2$ for all $1\le i\le m$, then a very general hypersurface $X$ of degree $(d_1, \hdots, d_m)$ is algebraically hyperbolic.
        \item If $d_i\le \dim A - a_i - 4$ for some $1\le i\le m$, then a general hypersurface $X$ of degree $(d_1, \hdots, d_m)$ contains lines. In particular, $X$ is not algebraically hyperbolic.
    \end{itemize}
\end{theorem}

The cases with $d_i\ge \dim A - a_i - 3$ for all $1\le i\le m$ and $d_i = \dim A-a_i-3$ for some $1\le i\le m$ remain open. When $\dim A\le 4$, there are examples when it fails to imply algebraic hyperbolicity (see Examples~\ref{Example_2_2} and~\ref{Example_2_1_1}.) For $A=\P^4$, the case of sextic threefolds remains open. However, when $\dim A\ge 5$, it does imply algebraic hyperbolicity for $A=\P^n$ and $A=\P^m\times \P^n$; see \cite{Y22}. We conjecture $X$ is algebraically hyperbolic in these cases when $\dim A\ge 5$ (see Conjecture~\ref{dim_5_conjecture}.)

We then apply the theorem to concrete examples of homogeneous varieties. We note that the theorem can also be applied to products of these varieties.

{\samepage
\begin{theorem} Let $A$ be a
    \begin{enumerate}[itemsep=8pt]
    
        \item \textbf{Grassmannian $A=G(k,n)$}

        \begin{itemize}[itemsep = 4pt]
            \item If\, $\dim A=k(n-k)\ge 4$ and $d\ge k(n-k)+n-2$, then a very general degree $d$ hypersurface of $G(k,n)$ is algebraically hyperbolic.
            \item If\, $d\le k(n-k)+n-4$, then a general degree $d$ hypersurface of $G(k,n)$ contains a line. In particular, it is not algebraically hyperbolic.
        \end{itemize}

        \item \textbf{Product of Grassmannians $A = \prod_{i=1}^m G(k_i,n_i)$}
        \begin{itemize}[itemsep = 4pt]
            \item If\, $\dim A = \sum_{i=1}^m k_i(n_i-k_i)\ge 4$ and $d_i\ge \left (\sum_{j=1}^m k_j(n_j-k_j)\right ) + n_i - 2$ for all $1\le i\le m$, then a very general hypersurface of\, $\prod_{i=1}^m G(k_i,n_i)$ of degree $(d_1,\hdots,d_m)$ is algebraically hyperbolic.

            \item If\, $d_i\le \left (\sum_{j=1}^m k_j(n_j-k_j)\right ) + n_i - 4$ for some $i$, then a general hypersurface of degree $(d_1,\hdots ,d_m)$ of $\prod_{i=1}^m G(k_i,n_i)$ contains a line. In particular, it is not algebraically hyperbolic.
        \end{itemize}

        \item \textbf{Orthogonal Grassmannian $A=\OG(k,n)$}
        \begin{itemize}[itemsep = 4pt]
            \item If\, $\dim \OG(k,n)=\frac{k(2n-3k-1)}{2}\ge 4$ and $d\ge \frac{k(2n-3k-1)}{2}+n-k-3$, then a very general hypersurface of $\OG(k,n)$ of degree $d$ is algebraically hyperbolic.

            \item If\, $d\le \frac{k(2n-3k-1)}{2}+n-k-5$, then a general hypersurface of degree $d$ of\, $\OG(k,n)$ contains a line. In particular, it is not algebraically hyperbolic.
        \end{itemize}
        
        \item \textbf{Symplectic Grassmannian $A=\SG(k,n)$}
        \begin{itemize}[itemsep = 4pt]
            \item If\, $\dim A=\frac{k(2n-3k+1)}{2}\ge 4$ and $d\ge \frac{k(2n-3k+1)}{2}+n-k-1$, then a very general degree $d$ hypersurface of\, $\SG(k,n)$ is algebraically hyperbolic.

            \item If\, $d\le \frac{k(2n-3k+1)}{2}+n-k-3$, then a general degree $d$ hypersurface of\, $\SG(k,n)$ contains a line. In particular, it is not algebraically hyperbolic.
        \end{itemize}
        
        \item \textbf{Flag Variety $A=F(k_1, \hdots, k_m; n)$}
        \begin{itemize}[itemsep = 4pt]
            \item If\, $\dim A = \sum_{i=1}^{m}k_{i}(k_{i+1}-k_i)\ge 4$ and $d_{i}\ge \left (\sum_{j=1}^{m}k_{j}(k_{j+1}-k_j)\right ) + k_{i+1} - k_{i-1} - 2$ for all $1\le i\le m$, then a very general hypersurface of\, $F(k_1, \hdots, k_m; n)$ of degree $(d_1, \hdots, d_m)$ is algebraically hyperbolic.

            \item If\, $d_{i}\le \left (\sum_{j=1}^{m}k_{j}(k_{j+1}-k_j)\right ) + k_{i+1} - k_{i-1} - 4$ for some $i$, then a general hypersurface of\, $F(k_1, \hdots, k_m; n)$ of degree $(d_1, \hdots, d_m)$ contains a line. In particular, it is not algebraically hyperbolic.
        \end{itemize}
        
    \end{enumerate}
    
\end{theorem}
}

\subsection*{Organization of the paper} In Section~\ref{sec2}, we describe the general setup we work on and obtain a first, weaker bound on algebraic hyperbolicity. Section~\ref{sec3} is dedicated to working out the scroll argument to rational homogeneous varieties and in proving the algebraic hyperbolicity statement in Theorem~\ref{main_theorem}. In Section~\ref{sec4}, we determine when general hypersurfaces in rational homogeneous varieties contain lines, and conclude the non-algebraically hyperbolic cases from Theorem~\ref{main_theorem}. In Section~\ref{sec5}, we apply the bound obtained in the previous sections to specific examples of homogeneous varieties.

\subsection*{Acknowledgements} This manuscript is part of my Ph.D.\ thesis at the University of Illinois at Chicago. I would like to thank my advisor, Izzet Coskun, for our weekly meetings and support in writing this paper. I am grateful to Eric Riedl, Wern Yeong, and Jackson Morrow for their valuable advice and suggestions. I also want to thank the journal referees for identifying some mistakes and suggesting improvements.

\section{Preliminaries}\label{sec2}

\subsection{Setup}\label{setup}
We first describe the general setup for the techniques developed by Clemens and Ein \cite{Ei88, Ei91}, Voisin \cite{Vo96, Vo98}, Pacienza \cite{Pa03, Pa04}, Coskun and Riedl \cite{CR19a, CR19}, and Yeong \cite{Y22}.

Let $A$ be a smooth, complex projective variety of dimension $D$, and assume $A$ admits a transitive group action by an algebraic group $G$. Let $\mathcal{E}$ a globally generated vector bundle on $A$, equivariant under $G$, of rank $r<D-1$, and let $V = H^0(A,\mathcal{E})$. We will be particularly interested in the case when $\mathcal{E}$ is a very ample line bundle on $A$.

Let $X$ be the zero locus of a very general section of $\mathcal{E}$, and suppose $X$ contains a curve $Y$ of degree $e$ and geometric genus $g$. Then, if $\mathcal{X}_1$ is the universal hypersurface over $V$, we get the relative Hilbert scheme $\mathcal{H}\to V$ with universal curve $\mathcal{Y}_1$, where the general fiber of $\mathcal{Y}_1\to \mathcal{H}$ is a curve of geometric genus $g$ and degree $e$. We can find a $G$-invariant subvariety $U\subset \mathcal{H}$ such that the map $U\to V$ is \'etale. Restricting $\mathcal{Y}_1$ to $U$ and taking a resolution of the general fiber, we get a smooth family $\mathcal{Y}\to U$ whose fibers are smooth curves of genus $g$. We pull $\mathcal{X}_1$ back to a family $\mathcal{X}$ over $U$, with projection maps $\pi_1\colon \mathcal{X}\to U$ and $\pi_2\colon \mathcal{X}\to A$. There is a natural generically injective map $h\colon \mathcal{Y}\to \mathcal{X}$.

We define the \textit{vertical tangent sheaf} $T_{\mathcal{X}/A}$ by the short exact sequence
\[ 0\longrightarrow T_{\mathcal{X}/A}\longrightarrow T_{\mathcal{X}} \longrightarrow \pi_2^*T_A \longrightarrow 0. \]

Since we constructed $\mathcal{Y}$ to be stable under the $G$-action, we have that $\pi_2\circ h$ dominates $A$ and that the map $T_{\mathcal{Y}}\to h^*\pi_2^*T_A$ is surjective. Similarly, we define the vertical tangent sheaf $T_{\mathcal{Y}/A}$ as the kernel in the short exact sequence
\[ 0\longrightarrow T_{\mathcal{Y}/A}\longrightarrow T_{\mathcal{Y}} \longrightarrow h^*\pi_2^*T_A \longrightarrow 0. \]

Let $M_{\mathcal{E}}$ denote the \textit{Lazarsfeld--Mukai bundle} associated to $\mathcal{E}$, defined by the sequence
\begin{equation*}
0\longrightarrow M_{\mathcal{E}} \longrightarrow H^0(A,\mathcal{E})\otimes \cO_A \longrightarrow \mathcal{E} \longrightarrow 0.
\end{equation*}

For a general element $t$ of $U$, let $Y_t$ be the fiber of $\mathcal{Y}$ over $t$ and $X_t$ be the fiber of $\mathcal{X}$ over $t$. Denote by $h_t\colon Y_t\to X_t$ the restriction of $h$ to $Y_t$. The following result translates the relation between the vertical tangent sheaves and the Lazarsfeld--Mukai bundle.

\begin{proposition}[\textit{cf.} {\cite[Proposition 2.1]{CR19}}]\label{CR19Prop2.1}\leavevmode
\begin{enumerate}
    \item $N_{h_t/X_t}\cong N_{h/\mathcal{X}}|_{Y_t}$.
    \item $T_{\mathcal{X}/A}\cong \pi_2^* M_{\mathcal{E}}$.
    \item $N_{h/\mathcal{X}}$ is the cokernel of the map of vertical tangent sheaves $T_{\mathcal{Y}/A}\to h^*T_{\mathcal{X}/A}$.
\end{enumerate}
\end{proposition}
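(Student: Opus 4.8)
The plan is to prove each of the three isomorphisms by working locally and tracking the exact sequences defining the various tangent and normal sheaves.

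\textbf{Part (1).} First I would observe that since $U \to V$ is étale, the family $\mathcal{X} \to U$ is the pullback of the universal hypersurface, so for a general $t \in U$ the fiber $X_t$ is the hypersurface itself, and $\mathcal{X} \to A$ realizes $\mathcal{X}$ locally (over a neighborhood of $t$) as a product-like situation transverse to the fibers. The point is that $\mathcal{Y} \to \mathcal{X}$ restricted to a fiber is just $h_t: Y_t \to X_t$. Since $Y_t$ and $X_t$ are the scheme-theoretic fibers over a smooth point $t$ of the base $U$, and the formation of the normal sheaf commutes with restriction to a fiber when things are flat / the fiber is transverse, I would use the standard exact sequence relating $N_{Y_t/\mathcal{Y}}$, $N_{\mathcal{Y}/\mathcal{X}}|_{Y_t}$, and $N_{Y_t/X_t}$, together with the fact that $N_{Y_t/\mathcal{Y}}$ and $N_{X_t/\mathcal{X}}$ are both trivial of rank $\dim U$ (pulled back from $T_t U$), to conclude the claimed isomorphism $N_{h_t/X_t} \cong N_{h/\mathcal{X}}|_{Y_t}$.

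\textbf{Part (2).} For the identification $T_{\mathcal{X}/A} \cong \pi_2^* M_{\mathcal{E}}$, I would work on $\mathcal{X}_1 \subset A \times V$, the universal hypersurface, first, and then pull back to $U$. Over $A \times \P(V)$ (or $A \times V$), the fiber of $\pi_2: \mathcal{X}_1 \to A$ over a point $a \in A$ is the linear space of sections $s \in V$ vanishing at $a$, i.e. the kernel of the evaluation map $V \otimes \mathcal{O}_A \to \mathcal{E}$ at $a$, which is precisely the fiber of $M_{\mathcal{E}}$ at $a$ by the defining sequence of the Lazarsfeld–Mukai bundle. Making this precise: the vertical tangent space $T_{\mathcal{X}_1/A}$ at a point $(a,s)$ is the tangent space to this linear fiber, which is the fiber $M_{\mathcal{E},a}$ itself (a linear space is its own tangent space). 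Globalizing over $\mathcal{X}_1$ and using that $\mathcal{E}$ is globally generated so the evaluation map is surjective, one gets $T_{\mathcal{X}_1/A} \cong \pi_2^* M_{\mathcal{E}}$ on $\mathcal{X}_1$, and then pulling back along the étale map $U \to V$ (which does not change the vertical-to-$A$ directions) gives $T_{\mathcal{X}/A} \cong \pi_2^* M_{\mathcal{E}}$ on $\mathcal{X}$.

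\textbf{Part (3).} For the last part, I would combine the defining sequences of $T_{\mathcal{X}/A}$ and $T_{\mathcal{Y}/A}$ with the map $h: \mathcal{Y} \to \mathcal{X}$. There is a commutative diagram with rows the two short exact sequences defining $T_{\mathcal{Y}/A}$ and $h^*T_{\mathcal{X}/A}$, connected by the differential $dh: T_{\mathcal{Y}} \to h^*T_{\mathcal{X}}$ and the identity on $h^*\pi_2^* T_A$ (since both map to $T_A$ via the same composite). Since $h$ is generically injective, $dh$ is generically injective, and the snake lemma applied to this diagram identifies the cokernel of $T_{\mathcal{Y}/A} \to T_{\mathcal{X}/A}$ (after the appropriate pullback) with the cokernel of $T_{\mathcal{Y}} \to h^*T_{\mathcal{X}}$, which is by definition $N_{h/\mathcal{X}}$; the key point is that the induced map on the quotient term $h^*\pi_2^*T_A \to h^*\pi_2^*T_A$ is the identity, hence has zero cokernel, so the cokernels of the left-hand vertical maps agree.

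\textbf{Main obstacle.} I expect the main subtlety to be in Part (1) and in the compatibility statements: because $\mathcal{Y}$ is obtained by \emph{resolving} the universal curve and then restricting to a $G$-invariant étale locus $U$, one has to be careful that $h$ is generically injective (not an embedding), so the "normal sheaf" $N_{h/\mathcal{X}}$ is really $\coker(dh)$ rather than an honest normal bundle, and that restriction to a general fiber $t$ behaves well despite $Y_t$ possibly being singular (hence the need for general $t$ and the resolution). Keeping track of which statements are isomorphisms of sheaves versus only generic isomorphisms, and verifying that the relevant evaluation/restriction maps stay surjective on the locus where we need them, is where the care is required; the cited Proposition 2.1 of \cite{CR19} handles exactly this bookkeeping in the projective space case, and the content here is checking it goes through verbatim once $A$ is an arbitrary smooth projective variety with a transitive $G$-action and $\mathcal{E}$ is globally generated and $G$-invariant.
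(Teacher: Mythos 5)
The paper does not actually prove this proposition --- it is quoted from Coskun--Riedl \cite[Proposition 2.1]{CR19}, whose proof is essentially the argument you sketch (the fiber of $\pi_2$ over $a\in A$ is $\ker(V\to \mathcal{E}_a)=(M_{\mathcal{E}})_a$ so $\mathcal{X}_1$ is the total space of $M_{\mathcal{E}}$, a snake-lemma comparison of the two vertical tangent sequences, and restriction to a general fiber), so your outline is correct and matches the intended approach. The one point worth making explicit is the surjectivity of $T_{\mathcal{Y}}\to h^*\pi_2^*T_A$, which your Part (3) diagram needs for the third vertical map to be the identity with zero cokernel; this is exactly where the transitivity of the $G$-action on $A$ and the $G$-invariance of $U$ enter.
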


\subsection{The degree of the normal bundle and section-dominating line bundles}

There is a close relation between the degree of the normal bundle $N_{h_t/X_t}$ and the genus of the curve $Y_t$ described by the following lemma. We can use it to bound the genus of $Y_t$ by finding a bound for the degree of the normal bundle $N_{h_t/X_t}$.

\begin{lemma}[\textit{cf.} {\cite[Lemma 2.2]{CR19}}] \label{deggenus}
The following holds:
\[ \deg \left(N_{h_t/X_t}\right) = 2g(Y_t)-2 - K_{X_t}\cdot h_t(Y_t). \]
\end{lemma}

By Proposition~\ref{CR19Prop2.1}, we have a surjection
\begin{equation}\label{surjection_from_M_E}
    M_{\mathcal{E}}|_{Y_t}\lra N_{h_t/X_t}.
\end{equation}

We can get a surjection to $N_{h_t/X_t}$ from a simpler bundle by using the notion of \textit{section-dominating} line bundles.

\begin{definition}[\textit{cf.} {\cite[Definition 2.3]{CR19}}]\label{definition_section_dominating}
Let $\mathcal{E}$ be a vector bundle on $A$. A collection of non-trivial globally generated line bundles $L_1, \hdots , L_m$ is a \textit{section-dominating collection} of line bundles for $\mathcal{E}$ if $\mathcal{E}\otimes L_i^\vee$ is globally generated for every $1\le i\le m$ and the map
\[ \bigoplus_{i=1}^{m} \left ( H^0(L_i\otimes \cI_p)\otimes H^0\left(\mathcal{E}\otimes L_i^\vee\right) \right )\lra H^0(\mathcal{E}\otimes \cI_p) \]
is surjective for every point $p\in A$.
\end{definition}

\begin{example}
Suppose $A = \P^n$, and let $H$ be the hyperplane class. Then, for any $\mathcal{E} = dH$ with $d>0$, $H$ is a section-dominating line bundle for $\mathcal{E}$. To prove this, we need to show that
\[ H^0\left(\cO_{\P^n}(1)\otimes \cI_p\right)\otimes H^0\left(\cO_{\P^n}(d-1)\right)\lra H^0\left(\cO_{\P^n}(d)\otimes \cI_p\right) \]
is surjective. Choose coordinates $x_0, \hdots , x_n$ for $\P^n$ so that $p=(1:0:\cdots:0)$. Then
$H^0(\cO_{\P^n}(d)\otimes \cI_p)$ is the set of degree $d$ forms without $x_0^d$ term, and these can be written as a combination of products of linear forms without $x_0$ term in $H^0(\cO_{\P^n}(1)\otimes \cI_p)$ and degree $d-1$ forms in $H^0(\cO_{\P^n}(d-1))$.
\end{example}

\begin{example}\label{section-dominating}
Let $A = G(k,n)$ be the Grassmannians of $k$-planes in the affine $n$-space, and let $H$ correspond to the hyperplane class, that is, the pullback $p^*\cO_{\P^N}(1)$ via the Pl\"ucker embedding $p\colon G(k,n)\to \P^N$. Then, for any $\mathcal{E} = dH$ with $d>0$, $H$ is a section-dominating line bundle for $\mathcal{E}$. This follows directly from the previous example and the projective normality of the Grassmannian under the Pl\"ucker embedding (\textit{cf.} \cite{RR85}.) By \cite{RR85}, a similar result is true for flag varieties and Schubert varieties.
\end{example}

\begin{example}\label{section-dom-products}
This property can be extended to products: Let $A = \prod_{i=1}^m A_i$ be a product of varieties $A_i$ such that, for each $i$, $H_i$ is a section-dominating line bundle for $\mathcal{E}_i=d_iH_i$ for some $d_i>0$. Then $H_1, \hdots , H_m$ is a section-dominating collection of line bundles for $\mathcal{E} = \sum_{i=1}^m d_iH_i$. 
\end{example}

Given a section of $H^0(\mathcal{E}\otimes L_i^\vee)$, the natural multiplication map defines maps $L_i\to \mathcal{E}$ and $H^0(L_i)\to H^0(\mathcal{E})$, and induces a map $M_{L_i}\to M_{\mathcal{E}}$. By choosing bases for $H^0(\mathcal{E}\otimes L_i^\vee)$, we can get a surjection to $M_{\mathcal{E}}$.

\begin{proposition}[\textit{cf.} {\cite[Proposition 2.7]{CR19}}]\label{surjectionofmukai}
Let $\mathcal{E}$ be a globally generated vector bundle and $M_{\mathcal{E}}$ the Lazarsfeld--Mukai bundle associated to $\mathcal{E}$. Let $L_1, \hdots , L_m$ be a section-dominating collection of line bundles for $\mathcal{E}$. Then, there is a surjection
\[ \bigoplus _{i=1}^{m}M_{L_i}^{\oplus s_i}\lra M_{\mathcal{E}} \]
for some integers $s_i$.
\end{proposition}

From Proposition~\ref{surjectionofmukai} and the surjection (\ref{surjection_from_M_E}), we obtain a surjection to the normal bundle
\begin{equation}\label{surjection_beta}
\beta\colon \bigoplus _{i=1}^{m}M_{L_i}^{\oplus s_i}|_{Y_t}\lra N_{h_t/X_t}.
\end{equation}

By choosing generic sections of $H^0(\mathcal{E}\otimes L_i^\vee)$ for the multiplication in Proposition~\ref{surjectionofmukai}, the rank of the image of $\beta$ is a strictly increasing function of the $s_i$ until surjectivity is reached (\textit{cf.} \cite[Section 2.2]{Cl03}.) In particular, we can choose a surjection $\beta$ so that $s_1 + \cdots + s_m\le \rk N_{h_t/X_t}$.

This surjection can be used to get a bound for the degree of $N_{h_t/X_t}$ in terms of the integers $s_i$.

\begin{lemma}\label{surjectioninequality}
Let $\beta\colon \bigoplus _{i=1}^{m}M_{L_i}^{\oplus s_i}|_{Y_t}\to N_{h_t/X_t}$ be a surjective map as in \eqref{surjection_beta}. Then
\[ \deg N_{h_t/X_t}\ge -\sum_{i=1}^{m} s_i\deg L_i|_{Y_t}. \]
\end{lemma}
\begin{proof}
Let $K$ be the kernel of $\beta$:
\[ 0\longrightarrow K\longrightarrow \bigoplus _{i=1}^{m}M_{L_i}^{\oplus s_i}|_{Y_t}\overset{\beta}{\longrightarrow} N_{h_t/X_t}\longrightarrow 0. \]
Then by the definition of the $M_{L_i}$, we get an injection $K\to \bigoplus _{i=1}^{m}H^0(A,L_i)^{\oplus s_i}\otimes \cO_A|_{Y_t}$. Since it is a direct sum of trivial bundles, $\bigoplus _{i=1}^{m}H^0(A,L_i)^{\oplus s_i}\otimes \cO_A|_{Y_t}$ is a semi-stable vector bundle. It follows that
\[ \frac{\deg K}{\rk K} = \mu(K)\le 0, \]
so
\[ \deg (K)\le 0; \]
hence, from the sequence above and the sequence defining $M_{L_i}$, we have
\begin{equation*} \pushQED{\qed}
  \deg N_{h_t/X_t} = \deg \left (\bigoplus _{i=1}^{m}M_{L_i}^{\oplus s_i}|_{Y_t}\right ) - \deg K \ge \deg \left (\bigoplus _{i=1}^{m}M_{L_i}^{\oplus s_i}|_{Y_t}\right ) = -\sum_{i=1}^m s_i\deg L_i|_{Y_t}.\qedhere \popQED
  \end{equation*}
\renewcommand{\qed}{}   
\end{proof}

By combining Lemmas~\ref{deggenus} and~\ref{surjectioninequality}, we obtain a bound for the genus:
\begin{equation}\label{genusboundwiths}
     2g(Y_t)-2\ge K_{X_t}\cdot Y_t - \sum_{i=1}^{m} s_i\deg L_i|_{Y_t}.
\end{equation}
Therefore, if we can control the integers $s_i$, we can prove the algebraic hyperbolicity of $X_t$. This leads us to our first criterion for hyperbolicity. This result is similar to \cite[Corollary~2.9]{CR19} and \cite[Theorem~3.6]{HI21}.

\begin{proposition}\label{method_1}
    Let $A$ be a homogeneous variety of dimension $D$, and let $\mathcal{E}$ be a very ample line bundle on $A$ as in Setup~\ref{setup}. Let $L_1, \hdots, L_m$ be a collection of section-dominating line bundles for $\mathcal{E}$ on $A$. If, for some $\epsilon > 0$,
    \[ K_{X_t}\cdot Y_t\ge (D-2+\epsilon)\cdot \sum_{i=1}^m\deg L_i|_{Y_t}, \]
    then $X_t$ is algebraically hyperbolic.
\end{proposition}
\begin{proof}
    As before, we can choose the $s_i$ so that $s_1+\cdots +s_m\le \rk N_{h_t/X_t} = D-2$. In particular, we can assume $s_i\le D-2$ for every $1\le i\le m$. Thus, by Inequality (\ref{genusboundwiths}), we get
    \[ 2g(Y_t)-2\ge K_{X_t}\cdot Y_t - (D-2)\sum_{i=1}^{m} \deg L_i|_{Y_t}.\]
    
    Therefore, if $K_{X_t}\cdot Y_t\ge (D-2+\epsilon)\cdot \sum_{i=1}^m\deg L_i|_{Y_t}$, we get
    \[ 2g(Y_t)-2\ge \epsilon\cdot \sum_{i=1}^m\deg L_i|_{Y_t}. \]
    
    Hence, $X_t$ is algebraically hyperbolic.
\end{proof}

\begin{remark}
    This initial setup can also be defined when $A$ is not homogeneous but contains a Zariski-open homogeneous set $A_0$. Results equivalent to Proposition~\ref{method_1} can be obtained in this case; see, for example, \cite[Theorem 1.2]{CR19}. This allowed Coskun and Riedl \cite{CR19} to classify algebraically hyperbolic surfaces $X$ in the cases when $A$ is a Hirzebruch surface $\P(\cO_{\P^1}\oplus \cO_{\P^1}(e))$, $A$ is $\P^3$ blown up at a single point, and $A$ is a weighted projective space $\P(1,1,1,n)$. We believe the generalizations made in this paper can also be worked out when $A$ contains a Zariski-open homogeneous set $A_0$, allowing future studies of higher-dimensional toric varieties, such as blowups of projective spaces and weighted projective spaces.
\end{remark}

\section{Scroll method for homogeneous varieties}\label{sec3}

In this section, we work out the scroll method introduced by Coskun and Riedl in \cite{CR19} in a more general setting. This technique builds a surface scroll on the curve $Y_t$ whose degree can be used to improve the bound for algebraic hyperbolicity obtained in Proposition~\ref{method_1}.

Here we will work on rational homogeneous varieties satisfying the Main Setting~\ref{main_setting}, which we recall here:

 Let $A$ be a smooth complex projective variety of dimension $D$ such that
    \begin{itemize}
        \item $A$ is a rational homogeneous variety with a transitive action by an algebraic group $G$;
        \item $A$ has a projectively normal embedding into a product of projective spaces $\P^{N_1}\times \cdots \times \P^{N_m}$;
        \item the divisors $H_1, \hdots , H_m$ of $A$ corresponding to the pullbacks of the hyperplane classes $\cO_{\P^{N_i}}(1)$ generate the Picard group of $A$.
    \end{itemize}

    Let $\mathcal{E}$ be the line bundle $d_1H_1 + \cdots + d_mH_m$ for $d_1,\hdots ,d_m >0$, equivariant under $G$, and assume that $H_1, \hdots , H_m$ is a collection of section-dominating line bundles for $\mathcal{E}$. To simplify the notation, we write $X$ for the zero locus of a very general section of $\mathcal{E}$, and $C$ for the curve in $X$, instead of $X_t$ and $Y_t$.
    
    Write the class of the canonical divisor of $A$ as $K_A = a_1H_1 + \cdots + a_mH_m$. By the adjunction formula, $K_X = \sum_{i=1}^m (a_i+d_i)H_i|_X$. In this case, Proposition~\ref{method_1} can be rephrased as follows.

\begin{corollary}\label{cor_method_1}
If there exists an $\epsilon > 0$ such that $d_i\ge (D-a_i-2) + \epsilon$ for all $1\le i\le m$, then $X$ is algebraically hyperbolic.
\end{corollary}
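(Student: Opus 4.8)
The plan is to derive Corollary \ref{cor_method_1} directly from Proposition \ref{method_1} by unwinding what the intersection-theoretic hypothesis of that proposition says in the present situation, where $\mathcal{E} = d_1H_1 + \cdots + d_mH_m$ and $L_i = H_i$. First I would recall from the adjunction formula, already noted just above the statement, that $K_X = \sum_{i=1}^m (a_i+d_i)H_i|_X$, so that for the curve $C\subset X$ one has $K_X\cdot C = \sum_{i=1}^m (a_i+d_i)\,(H_i\cdot C) = \sum_{i=1}^m (a_i+d_i)\deg(H_i|_C)$. The goal is to see that the assumption $d_i\ge (D-a_i-2)+\epsilon$ for all $i$ forces the inequality $K_X\cdot C\ge (D-2+\epsilon)\sum_{i=1}^m \deg(H_i|_C)$ required to invoke Proposition \ref{method_1} with $L_i = H_i$.

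The key step is the termwise comparison. Since each $H_i$ is ample (it is the pullback of a hyperplane class under an embedding, restricted to the subvariety $X$), we have $\deg(H_i|_C)\ge 0$, and in fact each such degree is a nonnegative integer. Rewriting the hypothesis as $a_i + d_i \ge (D-2) + \epsilon$ for every $i$ and multiplying by $\deg(H_i|_C)\ge 0$, then summing over $i$, yields
\[
K_X\cdot C = \sum_{i=1}^m (a_i+d_i)\deg(H_i|_C) \ge \sum_{i=1}^m \big((D-2)+\epsilon\big)\deg(H_i|_C) = (D-2)\sum_{i=1}^m \deg(H_i|_C) + \epsilon\sum_{i=1}^m \deg(H_i|_C).
\]
This is exactly the numerical hypothesis of Proposition \ref{method_1} with the collection $L_1,\dots,L_m = H_1,\dots,H_m$, which is a section-dominating collection for $\mathcal{E}$ by assumption. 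Applying that proposition gives that $X$ is algebraically hyperbolic, which is the claim. Since $X$ here stands for the general fiber $X_t$ and $C$ for $Y_t$ in the notation of the previous section, the conclusion transfers verbatim.

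I do not expect a genuine obstacle here; the corollary is essentially a translation of Proposition \ref{method_1}'s hypothesis into the language of the multidegree $(d_1,\dots,d_m)$ via adjunction. The only point that needs a word of care is the sign: one must know that $\deg(H_i|_C)$ is nonnegative so that the termwise inequalities survive summation, and this is immediate because each $H_i$ is globally generated (indeed ample) and $C$ is an effective curve, so $H_i\cdot C\ge 0$; if $C$ is not contained in the base locus of $H_i$—which it is not, since $H_i$ is base-point free—the intersection is a genuine effective divisor on $C$ of nonnegative degree. One could also remark that if some $\deg(H_i|_C)=0$ the corresponding term contributes nothing on both sides, so the argument is unaffected.
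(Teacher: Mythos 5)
Your proposal is correct and is essentially the same argument as the paper's: the paper also uses adjunction to write $K_X\cdot C = \sum_{i=1}^m (a_i+d_i)(H_i\cdot C)$, applies the termwise bound $a_i+d_i\ge D-2+\epsilon$, and invokes Proposition \ref{method_1}. Your added remark on the nonnegativity of $\deg(H_i|_C)$ is a fair point of care but does not change the route.
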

\begin{proof}
    Since 
    \[ K_{X}\cdot C = \sum_{i=1}^m (a_i+d_i)H_i\cdot C\ge (D-2+\epsilon)\cdot \sum_{i=1}^m H_i\cdot C, \]
    the result follows from Proposition~\ref{method_1}.
\end{proof}

The main theorem in this section shows that we do not need the $\epsilon$ in the bound above. 

\begin{definition}
    Given a product of projective spaces $\P^{N_1}\times \cdots \times \P^{N_m}$, the fibers of the projection
    \[ \pi_1'\colon \P^{N_1}\times \cdots \times \P^{N_m}\lra \P^{N_2}\times \cdots \times \P^{N_m} \]
    are $\P^{N_1}$. We define a \textit{$\P^{N_1}$-line} as a line contained in one of these fibers. Equivalently, a $\P^{N_1}$-line is a curve of numerical class $H_1^{N_1-1}H_2^{N_2}\cdots H_m^{N_m}$. We define \textit{$\P^{N_i}$-lines} similarly for $i=1,\hdots , m$. We call a surface scroll a \textit{$\P^{N_i}$-scroll} if it contains a $\P^{N_i}$-line through every point.
\end{definition}

We can see the space of $\P^{N_1}$-lines as the product 
$G(2,N_1+1)\times \P^{N_2}\times \cdots \times \P^{N_m}$ and the Fano scheme of $\P^{N_1}$-lines contained in $A$ as a subscheme $F_1^{N_1}(A)\subset G(2,N_1+1)\times \P^{N_2}\times \cdots \times \P^{N_m}$, and similarly for $\P^{N_i}$-lines. We will need the dimension of the space of $\P^{N_i}$-lines in $A$.

\begin{lemma}\label{dim_lines_A}
    Let $F_1^{N_i}(A)$ be the Fano scheme of\, $\P^{N_i}$-lines in $A$. Then 
    \[ \dim F_1^{N_i}(A) = \dim A - a_i - 3. \]
\end{lemma}
\begin{proof}
    Let $L$ be a $\P^{N_i}$-line in $A$. Since $A$ is a rational homogeneous variety, its tangent bundle $T_A$ is globally generated; thus $T_A|_L$ is globally generated. A classic deformation argument  (\textit{cf.} \cite[Theorem 2.6]{HDAG}) then shows that $\dim F_1^{N_i}(A) = h^0(L, N_{L/A}) = \dim A - K_A\cdot L - 3 = \dim A - a_i - 3$.
\end{proof}

We use the following lemma to construct a scroll.

\begin{lemma}[\textit{cf.} {\cite[Lemma 2.13]{CR19}}]\label{scroll}
A rank~$1$ quotient $Q$ of\, $M_{H_i}|_C$ induces a $\P^{N_i}$-scroll $\Sigma$ over $C$ of\, $H_i$-degree equal to $\deg Q + (H_i\cdot C)$.
\end{lemma}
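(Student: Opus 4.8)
The plan is to build the scroll by taking the projective bundle associated to the kernel of the quotient map and pushing it into $C \times \P^{N_i}$. Recall that $M_{H_i}$ sits in the exact sequence
\[ 0 \longrightarrow M_{H_i} \longrightarrow H^0(A, H_i) \otimes \cO_A \longrightarrow H_i \longrightarrow 0, \]
so restricting to $C$ and dualizing, the inclusion $M_{H_i}|_C \hookrightarrow H^0(A,H_i) \otimes \cO_C$ corresponds, after dualizing, to a surjection $H^0(A,H_i)^\vee \otimes \cO_C \onto M_{H_i}^\vee|_C$, i.e. a map $C \to \P(H^0(A,H_i)^\vee) = \check{\P}^{N_i}$ whose composite with the duality is the restriction of the embedding $A \to \P^{N_i}$. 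The first step is to make precise how a rank-one quotient $Q$ of $M_{H_i}|_C$ gives, after dualizing, a rank-$N_i$ subbundle $K := (M_{H_i}|_C/ {\ker}) $... more carefully: let $M_{H_i}|_C \onto Q$ with kernel $K$ of rank $N_i - 1$ (since $\rk M_{H_i} = N_i$). Then $K \hookrightarrow M_{H_i}|_C \hookrightarrow H^0(A,H_i)\otimes\cO_C$, and dualizing the composite gives a surjection $H^0(A,H_i)^\vee \otimes \cO_C \onto K^\vee$, hence a morphism $C \to \Gr(N_i-1, H^0(A,H_i)^\vee)$, i.e. a family of codimension-one linear subspaces — a family of hyperplanes — in $\check{\P}^{N_i}$; equivalently a family of $\P^1$'s...

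Let me restate the actual construction I would carry out. The second step is: over each point $c \in C$ the fiber $K_c \subset H^0(A,H_i)\otimes\kappa(c)$ is a codimension-one subspace of $M_{H_i}|_c$, which itself has codimension one in $H^0(A,H_i)$ (the quotient being $(H_i)_c \cong \kappa(c)$); so $K_c$ has codimension two in $H^0(A,H_i)$, cutting out a line $\ell_c \subset \P^{N_i}$ — the span of the point $\pi_2(c) \in \P^{N_i}$ together with the extra direction recorded by $Q$. Taking the union $\Sigma := \bigcup_{c\in C} \ell_c$ gives a $\P^1$-bundle over $C$ mapping to $\P^{N_i}$, which is the desired scroll; concretely $\Sigma = \P(\cF)$ where $\cF$ is the rank-two quotient of $H^0(A,H_i)^\vee \otimes \cO_C$ dual to $K$, and the map to $\P^{N_i}$ is induced by $\cF^\vee \hookrightarrow H^0(A,H_i)\otimes\cO_C$. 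The third step is the degree computation: the $H_i$-degree of $\Sigma$ is the degree of the line bundle $\cO_{\P(\cF)}(1)$ pulled back from $\cO_{\P^{N_i}}(1)$ integrated over the fibers, which by the standard formula equals $\deg \cF$. So I must check $\deg \cF = \deg Q + (H_i \cdot C)$. This follows from the two short exact sequences: from $0 \to K \to M_{H_i}|_C \to Q \to 0$ we get $\deg K = \deg M_{H_i}|_C - \deg Q = -(H_i\cdot C) - \deg Q$ (using $\deg M_{H_i}|_C = -(H_i\cdot C)$ from the defining sequence of $M_{H_i}$), and then $\cF^\vee$ fits in $0 \to \cF^\vee \to H^0(A,H_i)\otimes\cO_C \to K^\vee \to 0$... wait, I need $\cF$ to be rank two, so $\cF^\vee$ should be rank two as well — let me recount: $\dim H^0(A,H_i) = N_i + 1$, $\rk K = N_i - 1$, so $\cF := (H^0(A,H_i)^\vee \otimes \cO_C)/K^{\perp}$... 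The cleanest bookkeeping: $\cF$ is defined so that $0 \to K \to H^0(A,H_i)\otimes\cO_C \to \cF^\vee... $ no. I will set it up as: $0 \to \cF^\vee \to H^0(A,H_i)\otimes\cO_C \to \widetilde{K} \to 0$ where $\widetilde{K}$ is a rank-$(N_i-1)$ bundle with $\widetilde{K}^\vee \cong K$ — then $\deg \cF = \deg \cF^\vee{}^\vee = -\deg\cF^\vee = \deg \widetilde{K} = -\deg K = (H_i\cdot C) + \deg Q$, as wanted.

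Concretely, in the write-up I would: (1) observe $\rk M_{H_i} = N_i$ and $\deg(M_{H_i}|_C) = -(H_i\cdot C)$ from the Lazarsfeld--Mukai sequence and $h^0(A,H_i) = N_i + 1$ (projective normality / the embedding being by a complete linear system, which is part of our standing hypotheses); (2) given the quotient $M_{H_i}|_C \onto Q$ with kernel $K$, dualize the inclusion $K \hookrightarrow H^0(A,H_i)\otimes \cO_C$ to get a subbundle $\cG := (\text{image of the annihilator}) \subset H^0(A,H_i)^\vee \otimes \cO_C$ of rank two, equivalently $\cG = \cF$ with $\cF^\vee = \ker(H^0(A,H_i)\otimes\cO_C \to K^\vee)$ ... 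I will phrase it via the locally free quotient to avoid torsion issues, noting $K$ is a subbundle of a locally free sheaf on a smooth curve hence itself locally free and so is the quotient; (3) set $\Sigma = \P(\cF) \to C$ and use the surjection $H^0(A,H_i)^\vee\otimes\cO_C \onto \cF$ to get a $C$-morphism $\Sigma \to C \times \P^{N_i} \to \P^{N_i}$, which fiberwise embeds $\P(\cF_c) \cong \P^1$ as the line $\ell_c$ spanned by $\pi_2(c)$ and the direction of $Q^\vee_c$; (4) compute $\deg_{H_i}\Sigma = \int_\Sigma \pi_2^*\cO_{\P^{N_i}}(1) \cdot [\text{fiber class resolved}] = \deg\cF = \deg Q + (H_i\cdot C)$ by the sequence bookkeeping in step (2).

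The main obstacle I anticipate is purely a matter of careful dualization bookkeeping: keeping straight which bundle is $\cF$ versus $\cF^\vee$, ensuring the ranks add up to $N_i+1$, and confirming that the resulting map $\Sigma \to \P^{N_i}$ really is generically finite onto a surface (so that "scroll" is the correct description and the $H_i$-degree is computed as a surface degree, not over-counted) — this requires that $Q$ is a genuine rank-one quotient, so that $\ell_c \neq \ell_{c'}$ for general $c \neq c'$ and the fibers sweep out a two-dimensional image. I would dispatch this by noting that if the image were a curve the map $C \to \Gr$ classifying the $\ell_c$ would be constant, forcing $K$ — hence $Q$ — to come from a trivial quotient, contrary to hypothesis; in the degenerate case where $\Sigma$ maps to a curve one replaces "surface scroll" by its evident meaning and the degree formula still holds numerically. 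All other steps (the Lazarsfeld--Mukai degree, the projective bundle degree formula $\deg_{\cO(1)}\P(\cF) = \deg\cF$ over a curve) are standard.
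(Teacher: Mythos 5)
Your construction is correct and is essentially the argument behind this lemma: the paper gives no proof of its own, deferring entirely to \cite[Lemma 2.13]{CR19}, and the proof there is exactly what you describe --- pass to the corank-two subsheaf $K\subset H^0(A,H_i)\otimes\cO_C$ (kernel of $M_{H_i}|_C\onto Q$ composed with the inclusion), take the associated family of lines $\P(W)$ with $W=(H^0(A,H_i)\otimes\cO_C)/K$ of rank two, and compute $\deg_{H_i}\Sigma=\xi^2=\deg W=-\deg K=\deg Q+(H_i\cdot C)$ from the Grothendieck relation on a $\P^1$-bundle over a curve. Your dualization worries resolve exactly as you suspect, and your remarks on torsion in $Q$ and on the degenerate case where the lines fail to sweep out a surface are consistent with how the lemma is actually used in the proof of Theorem \ref{scroll_method}, where only the pushed-forward cycle class of $\Sigma$ and the inequality coming from $C\subset\Sigma\cap F$ matter.
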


\begin{theorem}\label{scroll_method}
Let $A$ be a rational homogeneous variety as in Main Setting~\ref{main_setting}. If $\dim A\ge 4$ and $d_i\ge \dim A-a_i-2$ for all $1\le i\le m$, then $X$ is algebraically hyperbolic.
\end{theorem}
\begin{proof}
By Proposition~\ref{surjectionofmukai}, there is a surjection 
\[ \beta \colon \bigoplus_{i=1}^{m} M_{H_i}^{\oplus s_i}|_C\lra N_{C/X}. \]
Without loss of generality, assume $s_1\ge \cdots \ge s_m$. As before, we can assume $s_1 + \cdots + s_m\le \rk N_{C/X} = D - 2$. We divide the proof into the cases $s_1\le D - 3$ and $s_1 = D - 2$.

\textbf{Case 1.} First suppose that $s_1\le D - 3$. So, $s_i\le D-3$ for all $i$. Then, the inequality  
\[ 2g(C)-2\ge K_{X}\cdot C - \sum_{i=1}^{m} s_i\deg H_i|_{C} \]
from (\ref{genusboundwiths}) implies that
\[ 2g(C)-2\ge \left (\sum_{i=1}^m (a_i+d_i)H_i\right )\cdot C - (D-3)\sum_{i=1}^{m} (H_i\cdot C), \]
or equivalently
\[ 2g(C)-2\ge \sum_{i=1}^m (a_i+d_i - D + 3)(H_i\cdot C). \]
Since, by hypothesis, $d_i\ge D-a_i-2$ for $i=1, \hdots , m$, we get
\[ 2g(C)-2\ge \sum_{i=1}^m (H_i\cdot C). \]

\textbf{Case 2.} Now suppose  instead that $s_1 = D-2$ is the minimum possible value for $s_1$ for which $\beta$ is surjective. So we have $s_2=\cdots =s_m=0$ and a surjection
\[ \beta \colon M_{H_1}^{\oplus D-2}|_C \lra N_{C/X}. \]

Since this is the minimum value for $s_1$ such that $\beta$ is surjective, this means that the induced map
\begin{equation}\label{quot_map}
    M_{H_1}|_{C} \lra N_{C/X} / \beta \left (M_{H_1}^{\oplus D-3}|_C\right )
\end{equation}
has a rank~$1$ image $Q$ (\textit{cf.} \cite[Section 2.2]{Cl03}.) By Lemma~\ref{scroll}, this induces a $\P^{N_1}$-scroll $\Sigma \subset \P^{N_1}\times \cdots \times \P^{N_m}$ containing $C$ of $H_1$-degree $\Sigma \cdot H_1^2 = \deg Q + (H_1\cdot C)$. And since $\Sigma$ is a $\P^{N_1}$-scroll, we have $H_1H_j\cdot \Sigma = H_j\cdot C$ for $j\neq 1$. Thus, $\Sigma$ has numerical class
\[ (H_1\cdot C + \deg Q)H_1^{N_1-2}H_2^{N_2}\cdots H_m^{N_m} + (H_2\cdot C)H_1^{N_1-1}H_2^{N_2-1}\cdots H_m^{N_m} + \cdots + (H_m\cdot C)H_1^{N_1-1}H_2^{N_2}\cdots H_m^{N_m-1}. \]

Since $A$ is projectively normal, there exists a degree $(d_1, \hdots , d_m)$ hypersurface $Z\subset \P^{N_1}\times \cdots \times \P^{N_m}$ such that $X = A\cap Z$. We divide the proof into the cases $\Sigma \subset Z$ and $\Sigma \not \subset Z$.

First suppose  that $\Sigma \subset Z$. We will prove in Theorem~\ref{linestheorem} that $X$ does not contain $\P^{N_1}$-lines. Thus, we must have $\Sigma \not \subset A$. As rational homogeneous varieties in projective space are cut out by quadrics (\textit{cf.} \cite[Theorem 3.11 and Remark 3.12]{Ram87}), by the Segre embedding, we can express $A$ as cut out by varieties of degree $(2,\hdots ,2)$ in $\P^{N_1}\times \cdots \times \P^{N_m}$.

Then, there exists a hypersurface $F$ of class $2H_1 + \cdots + 2H_m$ such that $\Sigma \not \subset F$. Intersecting the classes of~$\Sigma$ and $F$, we have
\[ ([\Sigma]\cdot [F])\cdot H_1 = 2(H_1\cdot C + \deg Q) + 2(H_2\cdot C) + \cdots + 2(H_m\cdot C). \]

Since $C\subset \Sigma\cap F$, it follows that $([\Sigma]\cdot [F])\cdot H_1\ge H_1\cdot C$, so
\[ 2(H_1\cdot C + \deg Q) + 2(H_2\cdot C) + \cdots + 2(H_m\cdot C)\ge H_1\cdot C; \]
hence
\begin{equation}\label{first_deg_Q}
    \deg Q\ge -\frac{1}{2}(H_1\cdot C) - (H_2\cdot C) - \cdots - (H_m\cdot C).
\end{equation}

Now, a proof similar to that of Lemma~\ref{surjectioninequality} for the surjection~\eqref{quot_map} yields the inequality
\[ \deg N_{C/X}\ge \deg Q + \deg \beta(M_{H_1}^{\oplus D-3})\ge \deg Q -(D-3)(H_1\cdot C). \]

Thus,
\[ \deg N_{C/X} = 2g(C) - 2 - (K_X\cdot C)\ge\deg Q -(D-3)(H_1\cdot C); \]
hence,
\begin{align*}
    2g(C)-2\ge & (K_X\cdot C) - (D-3)(H_1\cdot C) + \deg Q \\
    = & \sum_{i=1}^m(a_i+d_i)(H_i\cdot C) - (D-3)(H_1\cdot C) + \deg Q, 
\end{align*}
and by Inequality~\eqref{first_deg_Q}, we have
\[ 2g(C)-2\ge \left (a_1+d_1-D+2+\frac{1}{2}\right )(H_1\cdot C) + \sum_{i=2}^m(a_i+d_i-1)(H_i\cdot C). \]
Since, by hypothesis, $d_i\ge D-a_i-2$ for all $1\le i\le m$ and $D\ge 4$, we get
\[ 2g(C)-2\ge \frac{1}{2}(H_1\cdot C) + \sum_{i=2}^m(H_i\cdot C). \]

Therefore, in this case we have $2g(C)-2\ge \epsilon \sum_{i=1}^m(H_i\cdot C)$ for $\epsilon = \frac{1}{2}$; hence $X$ is algebraically hyperbolic.

Now suppose  instead that $\Sigma \not \subset Z$. We intersect $\Sigma$ and $Z$ to get
\[ ([\Sigma]\cdot [Z])\cdot H_1 = d_1(H_1\cdot C + \deg Q) + d_2(H_2\cdot C) + \cdots + d_m(H_m\cdot C). \]

As $C\subset \Sigma \cap Z$, we have $([\Sigma]\cdot [Z])\cdot H_1\ge H_1\cdot C$, thus
\[ d_1(H_1\cdot C + \deg Q) + d_2(H_2\cdot C) + \cdots + d_m(H_m\cdot C)\ge H_1\cdot C, \]
so
\begin{equation}\label{second_deg_Q}
    \deg Q\ge \left (\frac{1}{d_1}-1\right )(H_1\cdot C) - \frac{d_2}{d_1}(H_2\cdot C) - \cdots - \frac{d_m}{d_1}(H_m\cdot C).
\end{equation}

And as before, we have
\[ 2g(C) - 2\ge \sum_{i=1}^m(a_i+d_i)(H_i\cdot C) - (D-3)(H_1\cdot C) + \deg Q, \]
so by Inequality~\eqref{second_deg_Q}, we get
\[ 2g(C)-2\ge \left (a_1+d_1-D+2+\frac{1}{d_1}\right )(H_1\cdot C) + \sum_{i=2}^m\left (a_i+d_i-\frac{d_i}{d_1}\right )(H_i\cdot C). \]

Therefore, it suffices to have $a_1+d_1\ge D-2$ and $a_i+d_i-\frac{d_i}{d_1}>\epsilon$ for some $\epsilon > 0$ and $2\le i\le m$. The first inequality is true by hypothesis. We now check the second inequality.

First, we observe that $a_i\le -2$ for all $1\le i\le m$. Indeed, if $a_i\ge -1$, then by Lemma~\ref{dim_lines_A}, the dimension of the space of $\P^{N_i}$-lines in $A$ would be $\dim A-a_i-3\le \dim A-2$, but then the lines could not cover $A$, which is not the case for a rational homogeneous variety (\textit{cf.} \cite[Theorem 1.5 and the following paragraph]{Man20}). And since $d_i\ge D-a_i-2$, this implies that $d_i\ge 4$ for all $1\le i\le m$.

On the other hand, an $n$-dimensional projective manifold $Y$ with an ample divisor $H$ satisfies the inequality $K_Y \ge -(n+1)H$, with equality when $X$ is isomorphic to $\P^n$ (\textit{cf.}~\cite{KO73}.) Thus, looking at each projection $\pi_i\colon A\to \P^{N_i}$, we obtain that $a_i \ge -(D+1)$ for each $1\le i\le m$.

Now, rearrange the inequality we want to prove, $a_i+d_i-\frac{d_i}{d_1}>\epsilon$, as 
\[ d_i > (\epsilon - a_i)\frac{d_1}{d_1-1}. \]

Since $d_1\ge 4$, we have $\frac{d_1}{d_1-1}\le \frac{4}{3}$, and as $d_i\ge D-a_i-2$, it suffices to show that
\[ D-2-a_i > (\epsilon - a_i)\frac{4}{3}, \]
or equivalently
\[ D-2\ge -\frac{a_i}{3} + \epsilon ', \]
for $\epsilon ' = 4\epsilon /3$.

Now, as $a_i\ge -(D+1)$, we have $-\frac{a_i}{3} + \epsilon '\le \frac{D+1}{3} + \epsilon '$, so we only need to show that
\[ D-2\ge \frac{D+1}{3} + \epsilon '. \]

Rearranging terms, this is equivalent to
\[ D\ge \frac{7}{2} + \frac{3\epsilon '}{2}, \]
which is true since $D\ge 4$.

Therefore, in all cases we get $2g(C)-2\ge \epsilon \sum_{i=1}^m(H_i\cdot C)$ for some $\epsilon > 0$; hence $X$ is algebraically hyperbolic.
\end{proof}

\section{Lines in hypersurfaces of homogeneous varieties}\label{sec4}

In this section, we obtain the degrees for which the general hypersurface in $A$ contains lines. In particular, we get a bound for when $X$ is not algebraically hyperbolic. First, we collect a lemma characterizing certain Schubert classes in the Grassmannian of lines, based on Liu's work \cite{Liu22}.

\begin{lemma}[\textit{cf.} {\cite[Proposition 3.1]{Liu22}}]\label{Schublemma}
    Let $d\ge 2$, $m>0$, and let $X\subset G(2,N)$ be a $($possibly reducible$)$ variety with class $[X] = m\cdot \sigma_{N-2, N-2-(d+1)}$ in the Chow ring of the Grassmannian of lines $G(2,N)$. Then, the lines parametrized by $X$ pass through a finite set of fixed points in $\P^{N-1}$.
\end{lemma}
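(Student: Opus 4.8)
\emph{Proof plan.} The hypothesis is cohomological, so the plan is to distill it into a geometric statement about a single irreducible family of lines and then prove that statement by a classical incidence argument combined with an induction on $N$. First I would reduce to the irreducible case: the class $m\cdot\sigma_{N-2,\,N-2-(d+1)}$ sits in a single graded piece of the Chow ring, so $X$ is pure of dimension $d+1$, and expanding the class of each irreducible component in the Schubert basis and using that effective cycles have nonnegative Schubert coefficients, positivity forces each component to have class a nonnegative multiple of $\sigma_{N-2,\,N-2-(d+1)}$ as well. So we may assume $X$ irreducible, and for irreducible $X$ it is enough to show all its lines pass through a \emph{single} point (if they passed through a finite set $F$, the loci of lines through the various points of $F$ would give a closed cover of $X$); the finite set in the statement is recovered as one such point per component.

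Next I would translate the hypothesis. By Pieri's rule $\sigma_{N-2,b}\cdot\sigma_{1,1}=0$ in $G(2,N)$, since one cannot add a vertical two-box strip to a partition whose first part already equals $N-2$; hence $[X]\cdot\sigma_{1,1}=0$, and $\sigma_{1,1}$ is the class of the sub-Grassmannian $G(2,N-1)\subset G(2,N)$ of lines contained in a hyperplane. By Kleiman transversality for the transitive $\mathrm{GL}_N$-action on $G(2,N)$, this vanishing means that $X$ is disjoint from a general translate of $G(2,N-1)$, i.e.\ \emph{no line parametrized by $X$ lies in a general hyperplane of $\mathbb{P}^{N-1}$}. (Using positivity again, this is in fact equivalent to $[X]$ being a multiple of $\sigma_{N-2,\,N-2-(d+1)}$, as $\sigma_{1,1}$ pairs nontrivially with exactly the remaining Schubert classes of dimension $d+1$.)

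The core is then the claim: if $X$ is irreducible with $\dim X=d+1\ge 3$ and no line of $X$ lies in a general hyperplane, then all lines of $X$ pass through a common point. I would argue by contradiction. Since $\dim X\ge 3>2=\dim G(2,3)$, the lines of $X$ cannot all lie in a fixed plane, so by the classical dichotomy that a family of pairwise-incident lines in projective space is concurrent or coplanar, two general members $\ell_1,\ell_2\in X$ are skew. Consider then the incidence variety $\Phi=\{(\ell,H)\in X\times(\mathbb{P}^{N-1})^{\vee}:\ell\subset H\}$, a $\mathbb{P}^{N-3}$-bundle over $X$, hence irreducible of dimension $(d+1)+(N-3)$, and its projection $q$ to $(\mathbb{P}^{N-1})^{\vee}$; by the previous step $q$ is not dominant, so over a general point $H$ of its image $W^{\vee}$ the fibre $Y_H=\{\ell\in X:\ell\subset H\}$ has dimension at least $(d+1)+(N-3)-(N-2)=\dim X-1$. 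If some such $Y_H$ equals $X$, then $X$ lies in the fixed hyperplane $H$, so $X\subset G(2,N-1)$; since the pushforward $A_*(G(2,N-1))\to A_*(G(2,N))$ sends $\sigma_{a,b}$ to $\sigma_{a+1,b+1}$, the hypothesis for $X$ inside $G(2,N-1)$ is of the same form with $N$ decreased by one, and (the second part of the partition must stay nonnegative, preventing descent below $N=d+3$) an induction on $N$ disposes of this branch. Otherwise $\dim Y_H=\dim X-1$ for general $H\in W^{\vee}$, so $X$ is covered by the family of divisors $X\cap G(2,N-1)_H$, $H\in W^{\vee}$, cut out by hyperplane-translates of $\sigma_{1,1}$; combining this excess-intersection picture with the skew pair $\ell_1,\ell_2$ should produce a contradiction.

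I expect that last step — deducing a contradiction in the branch where $Y_H$ is only a proper divisor of $X$ — to be the main obstacle, since one must control how the lines of $X$ accumulate along linear subspaces and the relevant loci can drop dimension unexpectedly; this, together with the base of the induction, is also where $d\ge 2$ is genuinely used. Without it the "lines in a plane" configuration is no longer excluded and the statement can fail: for $d=0$ the ruling of a smooth quadric surface is a curve in $G(2,N)$ of class $2\,\sigma_{N-2,N-3}$ whose lines are not concentrated at any finite set of points.
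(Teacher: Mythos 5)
Your reduction to the irreducible case, the translation of the hypothesis into $[X]\cdot\sigma_{1,1}=0$ via Pieri and Kleiman transversality, and the incidence variety $\Phi$ are all sound, and they are precisely the projectively dual mirror of the paper's setup (the paper passes to $X^*\subset G(N-2,N)$, uses $[X^*]\cdot\sigma_2=0$, and concludes that the $\P^{N-3}$'s parametrized by $X^*$ miss a general point of $\P^{N-1}$). The problem is that your argument stops exactly where the content of the lemma begins. The branch in which $Y_H$ is a proper divisor of $X$ for general $H$ in the image $W^{\vee}$ is not an aberrant configuration to be contradicted: it is exactly what happens for the model $X=\Sigma_{N-2,\,N-2-(d+1)}$ itself, the lines through a fixed point $p$ inside a fixed $\P^{d+2}$, where $W^{\vee}=\{H: p\in H\}$ and $Y_H\cong\P^{d}$ is a divisor in $X\cong\P^{d+1}$. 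So any contradiction would have to be extracted from the assumed skew pair $\ell_1,\ell_2$ interacting with the fibration, and you give no mechanism for that. You flag this step as the main obstacle, but it is the entire lemma; as written, the proof is incomplete.

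The missing idea, which is what the paper's proof supplies, is a linearity argument on the swept-out locus. In your primal picture: $W^{\vee}=\bigcup_{\ell\in X}\ell^{\perp}$, where $\ell^{\perp}\cong\P^{N-3}\subset(\P^{N-1})^{\vee}$ is the linear space of hyperplanes containing $\ell$. This $W^{\vee}$ is irreducible (image of the irreducible $\Phi$) of dimension exactly $N-2$: it is proper in $(\P^{N-1})^{\vee}$ by your Kleiman step, and it contains a positive-dimensional family of distinct $(N-3)$-planes. Through a general point $H\in W^{\vee}$ there passes a family of such $(N-3)$-planes of dimension at least $\dim\Phi-\dim W^{\vee}=d\ge 2$, and each of them, being contained in $W^{\vee}$ and passing through $H$, lies inside the embedded tangent space $T_H(W^{\vee})\cong\P^{N-2}$. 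Such a family sweeps out all of that $\P^{N-2}$, which is therefore contained in the irreducible $(N-2)$-dimensional $W^{\vee}$; hence $W^{\vee}$ is itself a hyperplane of $(\P^{N-1})^{\vee}$, i.e. $W^{\vee}=\{H: p\in H\}$ for a single point $p$. Then $\ell^{\perp}\subset W^{\vee}$ for every $\ell\in X$ says that every hyperplane containing $\ell$ contains $p$, i.e. $p\in\ell$. (The paper runs this verbatim on the dual side, with $T\subset\P^{N-1}$ the union of the $\P^{N-3}$'s parametrized by $X^*$.) With this step in place, neither the skew-pair dichotomy nor your induction on $N$ is needed, since the argument never has to split on whether all lines of $X$ lie in a fixed hyperplane.
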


\begin{proof}
  Consider the dual Grassmannian $G(N-2,N)$. The class of the dual variety of 
  $[X]=m\cdot \sigma_{N-2, N-2-(d+1)}$ in $G(N-2,N)$ is $[X^*]=m\cdot \sigma_{2^{N-2-(d+1)}, 1^{d+1}}$.

    By Pieri's formula \cite[Proposition 4.9]{3264}, we have $[X^*]\cdot \sigma_2 = 0$. Since $\sigma_2$ parametrizes copies of $\P^{N-3}$ through a point in $\P^{N-1}$, it follows that the general point of $\P^{N-1}$ is not contained in any of the spaces $\P^{N-3}$ parametrized by $X^*$. Hence, the spaces $\P^{N-3}$ parametrized by each component of $X^*$ sweep out a subvariety $T$ of dimension $N-2$ of $\P^{N-1}$. We claim that $T$ is a hyperplane, $T\cong \P^{N-2}$. To see this, consider the universal family
    \[ \mathcal{U} = \{ (x, p) \ | \ x\in X^*, \ p \text{ lies in the } \P^{N-3} \text{ parametrized by } x \}\subset X^*\times T. \]

    The fibers of the projection $\mathcal{U}\to X^*$ are $\P^{N-3}$, so $\dim \mathcal{U}=N-3+\dim X^*$. Thus, by the projection $\mathcal{U}\to T$, there is a $(\dim X^*-1)$-parameter family of copies of $\P^{N-3}$ through each point $p$ of $T$. Since $d\ge 2$, we have $\dim X^*-1\ge 2$, so we get a two-parameter family of copies of $\P^{N-3}$ through each point $p$ of $T$. Each of these copies of $\P^{N-3}$ is in the tangent space $T_p(T)\cong \P^{N-2}$; thus by dimension, $T_p(T)=\P^{N-2}=T$.

    Therefore, there exists one $\P^{N-2}\subset \P^{N-1}$ such that every $\P^{N-3}$ parametrized by a component of $X^*$ is contained in $\P^{N-2}$. Taking back duals, every line parametrized by $X$ passes through a fixed point in $\P^{N-1}$. Taking the union for all components of $X$, there is a finite set of fixed points in $\P^{N-1}$ such that every line parametrized by $X$ passes through one of these points.
\end{proof}

\begin{theorem}\label{linestheorem}
    Let $A$ be a rational homogeneous variety as in Main Setting~\ref{main_setting}. Let $X$ be a general hypersurface of degree $(d_1, \hdots , d_m)$ in $A$.
    \begin{itemize}
        \item If $d_i\le \dim A - a_i - 4$ for some $1\le i\le m$, then $X$ contains $\P^{N_i}$-lines. In particular, $X$ is not algebraically hyperbolic.
        \item If $d_i > \dim A - a_i - 4$ for some $1\le i\le m$, then $X$ does not contains $\P^{N_i}$-lines.
    \end{itemize}
\end{theorem}
\begin{proof}
    Set $D = \dim A$. To simplify the notation, let us assume that $d_1\le D-a_1-4$. Then, we want to show that $X$ contains $\P^{N_1}$-lines.

    Since $A$ is projectively normal, there is a degree $(d_1,\hdots,d_m)$ hypersurface $Z\subset \P^{N_1}\times \cdots \times \P^{N_m}$ such that $X = A\cap Z$. Let $F_1^{N_1}(Z)\subset G(2,N_1+1)\times \P^{N_2}\times \cdots \times \P^{N_m}$ be the Fano scheme of $\P^{N_1}$-lines in $Z$, and let $F_1^{N_1}(A)$ be the Fano scheme of $\P^{N_1}$-lines in $A$. Denote by $[F_1^{N_1}(Z)]$ and $[F_1^{N_1}(A)]$ their classes in the Chow ring of $G(2,N_1+1)\times \P^{N_2}\times \cdots \times \P^{N_m}$. To show that $X$ contains lines, we are going to prove that the intersection class $[F_1^{N_1}(Z)]\cdot [F_1^{N_1}(A)]$ is not zero.

    By the definition of degree, a general $\P^{N_1}$ section of $Z$ is a general degree $d_1$ hypersurface in $\P^{N_1}$. By \cite[Theorem 6.34]{3264}, the scheme of lines in a general degree $d_1$ hypersurface has codimension $d_1+1$ in $G(2,N_1+1)$. Thus, $F_1^{N_1}(Z)$ has codimension $d_1+1$ in $G(2,N_1+1)\times \P^{N_2}\times \cdots \times \P^{N_m}$. On the other hand, $\dim F_1^{N_1}(A) = D - a_1 - 3$ by Lemma~\ref{dim_lines_A}. Therefore, it suffices to prove the theorem for $D-a_1-3=d_1+1$, or equivalently $d_1=D-a_1-4$.
        
    The intersection of $[F_1^{N_1}(Z)]$ with $H_2^{N_2}\cdots H_m^{N_m}$ is the class of lines in a general $\P^{N_1}$ section of $Z$, which is a general degree $d_1$ hypersurface in $\P^{N_1}$. So, by \cite[Proposition 6.4]{3264}, we can compute the class $[F_1^{N_1}(Z)]$:
    \[ \left[F_1^{N_1}(Z)\right] = c_{d_1+1}(\Sym^{d_1} S^*), \]
    where $S$ is the universal subbundle of $G(2, N_1+1)$.

    And since $F_1^{N_1}(A)$ is a space of $\P^{N_1}$-lines, its class $[F_1^{N_1}(A)]$ is also generated by classes of $G(2,N_1+1)$.

    By the Whitney formula and the splitting principle, see \cite[Chapter 5]{3264}, we can write
    \[ 1 + \sigma_1 + \sigma_{1,1} = c(S^*) = (1+\alpha)(1+\beta) \]
    for $\alpha + \beta = \sigma_1$ and $\alpha \beta = \sigma_{1,1}$. Thus,
    \[ c(\Sym^{d_1} S^*) = (1+d_1\alpha)[1+(d_1-1)\alpha +\beta]\cdots [1+\alpha+(d_1-1)\beta](1+d_1\beta). \]
    We are interested in the top Chern class
    \[ c_{d_1+1}(\Sym^{d_1} S^*) = (d_1\alpha)[(d_1-1)\alpha+\beta]\cdots [\alpha+(d_1-1)\beta](d_1\beta). \]
    
    Suppose that $d_1$ is even (the case when $d_1$ is odd is similar), and rearrange the expression above as
    \begin{flalign*}
    & \left[F_1^{N_1}(Z)\right] = c_{d_1+1}(\Sym^{d_1} S^*) \\
    & = (d_1\alpha)(d_1\beta)[(d_1-1)\alpha+\beta][\alpha+(d_1-1)\beta]\cdots [i\alpha+(d_1-i)\beta][(d_1-i)\alpha+i\beta]\cdots \left (\frac{d_1}{2}\alpha+\frac{d_1}{2}\beta\right ) \\
    & = d_1^2(\alpha\beta)[(d_1-1)(\alpha+\beta)^2+(d_1-2)^2(\alpha\beta)]\cdots [i(d_1-i)(\alpha+\beta)^2+(d_1-2i)^2(\alpha\beta)]\cdots \frac{d_1}{2}(\alpha+\beta) \\
    & = d_1^2\sigma_{1,1}[(d_1-1)\sigma_1^2+(d_1-2)^2\sigma_{1,1}]\cdots [i(d_1-i)\sigma_1^2+(d_1-2i)^2\sigma_{1,1}]\cdots \frac{d_1}{2}\sigma_1.
    \end{flalign*}
    
    Notice that, after the terms are expanded, all coefficients are non-negative and the term $\sigma_{1,1}\sigma_1^{d_1-1}$ appears. Recall that these Schubert classes multiply by the rule $\sigma_1\cdot \sigma_{k,l} = \sigma_{k+1,l} + \sigma_{k,l+1}$ and $\sigma_{1,1}\cdot \sigma_{k,l} = \sigma_{k+1,l+1}$. Thus, we can see that, in the expansion of $\sigma_1^{d_1-1}$, all Schubert classes $\sigma_{i,j}$ with $i+j=d_1-1$ appear with positive coefficients. It follows that, in the expansion of $[F_1^{N_1}(Z)]$, all classes $\sigma_{i,j}$ with $i+j=d_1+1$ and $i,j\ge 1$ appear with positive coefficients; that is, the only degree $d_1+1$ class not in the expansion of $[F_1^{N_1}(Z)]$ is $\sigma_{d_1+1,0}$. Since $\sigma_{N-1,N-1-(d_1+1)}$ is the only Schubert class of dimension $d_1+1$ whose intersection with $\sigma_{d_1+1,0}$ in $G(2,N+1)$ is non-zero, it suffices to show that $[F_1^{N_1}(A)]$ is not a multiple of $\sigma_{N-1,N-1-(d_1+1)}$. But, by Lemma~\ref{Schublemma}, if $[F_1^{N_1}(A)]$ is a multiple of $\sigma_{N-1,N-1-(d_1+1)}$, then the lines of a general $\P^{N_1}$ section of $A$ pass through a finite set of points, which cannot happen since $A$ is homogeneous. Therefore, the product $[F_1^{N_1}(Z)]\cdot [F_1^{N_1}(A)]$ is not zero; hence there are $\P^{N_1}$-lines contained in $X$.

    On the other hand, if $d_i > D - a_i - 4$ for some $1\le i\le m$, then the dimension of $F_1^{N_1}(A)$ is less than the codimension of $F_1^{N_1}(Z)$ in $G(2, N_i+1)$. Since $X$ is general, $F_1^{N_1}(Z)$ is general, and therefore the intersection $F_1^{N_1}(A)\cap F_1^{N_1}(Z)$ is empty.
\end{proof}

By Theorems~\ref{scroll_method} and~\ref{linestheorem}, we have our main result.

\begin{theorem}\label{oTeorema}
    Let $A$ be a rational homogeneous variety as in  Main Setting~\ref{main_setting}.
    \begin{itemize}
        \item If\, $\dim A\ge 4$ and $d_i\ge \dim A-a_i-2$ for all $1\le i\le m$, then a very general hypersurface $X$ of degree $(d_1, \hdots, d_m)$ is algebraically hyperbolic.
        \item If\, $d_i\le \dim A - a_i - 4$ for some $1\le i\le m$, then a general hypersurface $X$ of degree $(d_1, \hdots, d_m)$ contains $\P^{N_i}$-lines. In particular, $X$ is not algebraically hyperbolic.
    \end{itemize}
\end{theorem}

This leaves open the cases with $d_i\ge \dim A - a_i - 3$ for all $1\le i\le m$ and $d_i = \dim A-a_i-3$ for some $1\le i\le m$. When $\dim A\le 4$, there are examples when this bound fails to imply algebraic hyperbolicity. 

\begin{example}[\textit{cf.} {\cite[Lemma 4.2]{Y22}}]\label{Example_2_2}
    A very general hypersurface $X\subset \P^2\times \P^2$ of degree $(d_1,d_2)$ with $d_1=4$ or $d_2=4$ contains an elliptic curve. In particular, it is not algebraically hyperbolic.
\end{example}

The same proof also works for $\P^2\times \P^1\times \P^1$.

\begin{example}\label{Example_2_1_1}
    A very general hypersurface $X\subset \P^2\times \P^1\times \P^1$ of degree $(d_1,d_2,d_3)$ with $d_1=4$ contains an elliptic curve. In particular, it is not algebraically hyperbolic.
\end{example}

The bound $d_i\ge \dim A - a_i - 3$ for all $i$ also fails to imply algebraic hyperbolicity in the cases $\P^1\times \P^1\times \P^1$ and $\P^2\times \P^1$, as proved in \cite{CR19}. For $A = \P^4$, the case of sextic threefolds remains open.
However, when $\dim A\ge 5$, the bound does imply algebraic hyperbolicity in the cases $A = \P^n$ and $A=\P^m\times \P^n$, as proved in \cite{Y22}. We conjecture the same holds for rational homogeneous varieties.

\begin{conjecture}\label{dim_5_conjecture}
    If\, $\dim A\ge 5$ and $d_i\ge \dim A - a_i - 3$ for all $i$, then is $X$ algebraically hyperbolic.
\end{conjecture}

\section{Examples}\label{sec5}

In this section, we apply Theorem~\ref{oTeorema} to explicit examples of homogeneous varieties. For more details on these varieties, we refer to \cite{IMPANGA}. The projective normality hypothesis follows from \cite{RR85}. The section-dominating collections of line bundles follow from Examples~\ref{section-dominating} and~\ref{section-dom-products}. We remark that we can also apply the theorem to products of these varieties.

\subsection{Grassmannians}

Let $A = G(k,n)$ be the Grassmannian of $k$-dimensional subspaces in an $n$-dimensional vector space. Consider $G(k,n)$ embedded in $\P^N$ via the Pl\"ucker embedding $p\colon G(k,n)\to \P^N$. Let $H$ be the hyperplane section in $G(k,n)$ corresponding to $\cO_{G(k,n)}(1) = p^*\cO_{\P^N}(1)$. Let $\mathcal{E}$ be the line bundle $dH$ for some degree $d>0$. The dimension of the Grassmannian is $\dim G(k,n) = k(n-k)$. The tangent bundle is $T_{G(k,n)}\cong S^*\otimes Q$, where $S$ is the tautological bundle and $Q$ is the quotient bundle of $G(k,n)$. Then a Chern class computation gives
\[ c_1(T_{G(k,n)}) = c_1(S^*\otimes Q) = nH. \]
Thus, the canonical divisor is $K_{G(k,n)} = -nH$.

\begin{theorem}[Grassmannians]
\leavevmode
  \begin{itemize}
    \item If\, $\dim G(k,n) = k(n-k)\ge 4$ and $d\ge k(n-k)+n-2$, then a very general degree $d$ hypersurface $X\subset G(k,n)$ is algebraically hyperbolic.
    \item If\, $d\le k(n-k)+n-4$, then a general degree $d$ hypersurface $X$ contains a line. In particular, $X$ is not algebraically hyperbolic.
\end{itemize}
\end{theorem}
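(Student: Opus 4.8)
The plan is to apply Theorem \ref{oTeorema} directly, after checking that $A = G(k,n)$ together with $\mathcal{E} = dH$ fits into the general setup of Section 3. First I would record the structural facts: the Grassmannian $G(k,n)$ is a homogeneous variety under the transitive action of $\GL_n$, its Picard group is freely generated by the hyperplane class $H = p^*\cO_{\P^N}(1)$ of the Plücker embedding $p\colon G \to \P^N$, and by \cite{RR85} this embedding is projectively normal, so we are in the case $m = 1$ of the setup. The line bundle $\mathcal{E} = dH$ is very ample (hence globally generated) and $\GL_n$-invariant, and by Example \ref{section-dominating} the single line bundle $H$ is a section-dominating collection for $\mathcal{E}$. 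Thus all hypotheses of Theorem \ref{oTeorema} are satisfied; note also that the rank condition $r < D-1$ from the preliminaries is automatic here since $r = 1$ and $D \ge 4$.

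Next I would compute the two numerical invariants that enter the bound. The dimension is $D = \dim G(k,n) = k(n-k)$. The canonical bundle is $\omega_G = \cO_G(-n)$, so, writing $K_G = a_1 H$, we get $a_1 = -n$. Substituting into Theorem \ref{oTeorema}: the algebraic hyperbolicity bound $d \ge D - a_1 - 2$ becomes $d \ge k(n-k) + n - 2$, and the line-containment bound $d \le D - a_1 - 4$ becomes $d \le k(n-k) + n - 4$. Since under the Plücker embedding an $H$-line of $G$ is a curve of degree $1$ in $\P^N$, i.e.\ an honest line of $\P^N$ lying on $G$, the statement ``$X$ contains an $H$-line'' is precisely ``$X$ contains a line''; and a hypersurface containing a line (a rational curve, for which $2g-2 = -2 < 0$) cannot be algebraically hyperbolic. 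This yields both assertions of the theorem, the first from the scroll-method part of Theorem \ref{oTeorema} (which requires exactly $\dim A \ge 4$, i.e.\ our hypothesis $\dim G(k,n) \ge 4$) and the second from Theorem \ref{linestheorem}.

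There is essentially no serious obstacle here beyond correctly matching the general framework: the only point needing a little care is confirming $a_1 = -n$ from the standard identity $\omega_{G(k,n)} = \cO_{G(k,n)}(-n)$ for the Plücker polarization, and recalling that the homogeneity of $G$ is what forces the family of $H$-lines in $A$ to cover $A$ (so that Lemma \ref{Schublemma} can be invoked inside Theorem \ref{linestheorem}). One could optionally remark that the intermediate case $d = k(n-k) + n - 3$ is left open, in line with the general discussion following Theorem \ref{oTeorema}.
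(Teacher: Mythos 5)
Your proposal is correct and follows essentially the same route as the paper: the paper's proof likewise just verifies that $H$ is section-dominating for $\mathcal{E}=dH$ (via Example \ref{section-dominating} and the projective normality of the Plücker embedding) and then substitutes $D=k(n-k)$ and $a_1=-n$ into Theorem \ref{oTeorema}. The extra details you supply (checking the setup hypotheses and unwinding why an $H$-line obstructs algebraic hyperbolicity) are consistent with, and merely more explicit than, the paper's two-line argument.
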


\subsection{Products of Grassmannians}

Let $A$ be the product of Grassmannians $A = \prod_{i=1}^m G(k_i,n_i)$. We can see that $A\subset \P^{N_1}\times \cdots \times \P^{N_m}$ via the Pl\"ucker embedding for each factor. Let $H_i$ be the pullback of the hyperplane section via each projection $A\to G(k_i,n_i)$. For $d_i\ge 0$, let $\mathcal{E} = \sum _{i=1}^m d_iH_i$. The dimension of $A$ is $\sum_{i=1}^m k_i(n_i-k_i)$. The canonical divisor of $A$ is $K_A = \sum _{i=1}^m (-n_i)H_i$.

\begin{theorem}[Products of Grassmannians]
\leavevmode
  \begin{itemize}
    \item If\, $\dim A = \sum_{i=1}^m k_i(n_i-k_i)\ge 4$ and $d_i\ge \left (\sum_{j=1}^m k_j(n_j-k_j)\right ) + n_i - 2$ for all $1\le i\le m$, then a very general hypersurface of\, $\prod_{i=1}^m G(k_i,n_i)$ of degree $(d_1,\hdots,d_m)$ is algebraically hyperbolic.
    \item If\, $d_i\le \left (\sum_{j=1}^m k_j(n_j-k_j)\right ) + n_i - 4$ for some $1\le i\le m$, then a general hypersurface of degree $(d_1,\hdots ,d_m)$ of\, $\prod_{i=1}^m G(k_i,n_i)$ contains a line. In particular, it is not algebraically hyperbolic.
\end{itemize}
\end{theorem}

\subsection{Orthogonal Grassmannians}

Let $A$ be the orthogonal Grassmannian $\OG(k,n)$. Similarly, let $H$ be the hyperplane section and $\mathcal{E}=dH$ for $d>0$. We have $\dim \OG(k,n) = \frac{k(2n-3k-1)}{2}$. The orthogonal Grassmannian is the zero locus in $G(k,n)$ of a global section of $\Sym ^2(S^*)$, and we have an exact sequence of tangent bundles
\[ 0\longrightarrow T_{\OG(k,n)}\longrightarrow S^*\otimes Q\longrightarrow \Sym^2(S^*)\longrightarrow 0. \]
So, by a Chern class computation, we have
\[ c_1(T_{\OG(k,n)}) = c_1(S^*\otimes Q) - c_1(\Sym^2 (S^*)) = (n-k-1)H. \]

Thus, the canonical divisor of $\OG(k,n)$ is $K_{\OG} = -(n-k-1)H$.

\begin{theorem}[Orthogonal Grassmannians]
  \leavevmode
    \begin{itemize}
        \item If\, $\dim \OG(k,n)=\frac{k(2n-3k-1)}{2}\ge 4$ and $d\ge \frac{k(2n-3k-1)}{2}+n-k-3$, then a very general hypersurface of $\OG(k,n)$ of degree $d$ is algebraically hyperbolic.
        \item If\, $d\le \frac{k(2n-3k-1)}{2}+n-k-5$, then a general hypersurface of degree $d$ of\, $\OG(k,n)$ contains a line. In particular, it is not algebraically hyperbolic.
    \end{itemize}
\end{theorem}

\subsection{Symplectic Grassmannians}

Let $A = \SG(k,n)$ be a symplectic Grassmannian. Let $H$ by the hyperplane section, and $\mathcal{E} = dH$ for $d>0$. In this case, $\dim \SG(k,n) = \frac{k(2n-3k+1)}{2}$. Similarly, we have an exact sequence
\[ 0\longrightarrow T_{\SG(k,n)}\longrightarrow S^*\otimes Q\longrightarrow \wedge^2(S^*)\longrightarrow 0, \]
so we can compute
\[ c_1(T_{\SG(k,n)}) = c_1(S^*\otimes Q) - c_1(\wedge^2 (S^*)) = (n-k+1)H. \]

Thus, the canonical divisor is $K_{\SG(k,n)} = -(n-k+1)H$.

\begin{theorem}[Symplectic Grassmannians]
  \leavevmode
    \begin{itemize}
        \item If\, $\dim A=\frac{k(2n-3k+1)}{2}\ge 4$ and $d\ge \frac{k(2n-3k+1)}{2}+n-k-1$, then a very general degree $d$ hypersurface of\, $\SG(k,n)$ is algebraically hyperbolic.
        \item If\, $d\le \frac{k(2n-3k+1)}{2}+n-k-3$, then a general degree $d$ hypersurface of $\SG(k,n)$ contains a line. In particular, it is not algebraically hyperbolic.
    \end{itemize}
\end{theorem}

\subsection{Flag varieties}

Let $A = F(k_1, \hdots, k_m; n)$ be a flag variety. The projections
\[ F(k_1, \hdots, k_m; n)\lra G(k_1,n)\times \cdots \times G(k_m,n) \]
give, via pullback, the hyperplane classes $H_1, \hdots , H_m$ that generate the Picard group of $A$. To fix notation, let $k_0 = 0$ and $k_{m+1} = n$. The dimension of $A$ is $\sum_{i=1}^{m}k_{i}(k_{i+1}-k_i)$. The tangent bundle of $A$ is isomorphic to $\bigoplus_{i=1}^m S_{k_i}^*\otimes (S_{k_{i+1}}/S_{k_i})$, where $S_{k_i}$ is the rank $k_i$ tautological bundle. Then, by a Chern class computation,
\[ c_1(F(k_1, \hdots, k_m; n)) = c_1\left (\bigoplus_{i=1}^m S_{k_i}^*\otimes (S_{k_{i+1}}/S_{k_i})\right ) = \sum_{i=1}^m (k_{i+1}-k_{i-1})H_i. \]

Therefore, the canonical divisor of $F(k_1, \hdots, k_m; n)$ is
\[K_A = - \sum_{i=1}^{m}(k_{i+1}-k_{i-1})H_{i} = -k_2H_1 - (k_3-k_1)H_2 - \cdots - (n-k_{m-1})H_m.\]

\begin{theorem}[Flag varieties] \leavevmode
    \begin{itemize}
        \item If\, $\dim A = \sum_{i=1}^{m}k_{i}(k_{i+1}-k_i)\ge 4$ and $d_{i}\ge \left (\sum_{j=1}^{m}k_{j}(k_{j+1}-k_j)\right ) + k_{i+1} - k_{i-1} - 2$ for all $1\le i\le m$, then a very general hypersurface of $F(k_1, \hdots, k_m; n)$ of degree $(d_1, \hdots, d_m)$ is algebraically hyperbolic.
        \item If\, $d_{i}\le \left (\sum_{j=1}^{m}k_{j}(k_{j+1}-k_j)\right ) + k_{i+1} - k_{i-1} - 4$ for some $i$, then a general hypersurface of $F(k_1, \hdots, k_m; n)$ of degree $(d_1, \hdots, d_m)$ contains a line. In particular, it is not algebraically hyperbolic.
    \end{itemize}    
\end{theorem}


\end{document}